\newtheorem{theorem}{Theorem}[section]
\newtheorem{proposition}[theorem]{Proposition}
\newtheorem{corollary}[theorem]{Corollary}
\theoremstyle{definition}
\newtheorem{definition}[theorem]{Definition}
\newtheorem{example}[theorem]{Example}
\theoremstyle{remark}
\newtheorem{remark}[theorem]{Remark}
\numberwithin{equation}{section}
\DeclareMathOperator{\Hom}{Hom}
\DeclareMathOperator{\Ker}{Ker}
\DeclareMathOperator{\Ig}{Im}
\DeclareMathOperator{\Der}{Der}
\begin{document}

\allowdisplaybreaks

\title{Secondary Hochschild cohomology and derivations}

\author{Kylie Bennett}
\address{Department of Mathematics, St. Norbert College, De Pere, WI 54115}
\email{kylie.bennett@snc.edu}

\author{Elizabeth Heil}
\address{Department of Mathematics, St. Norbert College, De Pere, WI 54115}
\email{elizabeth.heil@snc.edu}

\author{Jacob Laubacher}
\address{Department of Mathematics, St. Norbert College, De Pere, WI 54115}
\email{jacob.laubacher@snc.edu}

\subjclass[2020]{Primary 13D03; Secondary 16E40, 13N15}

\date{\today}

\keywords{Hochschild cohomology, cyclic cohomology, derivations.\\\indent\emph{Corresponding author.} Jacob Laubacher \Letter~\href{mailto:jacob.laubacher@snc.edu}{jacob.laubacher@snc.edu} \phone~920-403-2961.}

\begin{abstract}
In this paper, we introduce a generalization of derivations. Using these so-called secondary derivations, along with an analogue of Connes' Long Exact Sequence, we are able to provide computations in low dimension for the secondary Hochschild and cyclic cohomologies associated to a commutative triple. We then establish a universal property, which paves the way to relating secondary K\"ahler differentials with the aforementioned secondary derivations.
\end{abstract}

\maketitle

%%%%%%%%%%%%%%%%%%%%%%%%%%%%%%%%%%%%%%%%%%%%%%%%%%%%%%%%%%%%%%%%%%%%%%%%%%%%
\section{Introduction}
%%%%%%%%%%%%%%%%%%%%%%%%%%%%%%%%%%%%%%%%%%%%%%%%%%%%%%%%%%%%%%%%%%%%%%%%%%%%

Hochschild cohomology was introduced by Hochschild himself in 1945 in \cite{H}, and one of its generalizations, the aptly named secondary Hochschild cohomology, was brought to light by Staic in 2016 in \cite{S}. In 1964, Gerstenhaber employed the former to study deformations of an algebra $A$ over a field $\mathbbm{k}$ in \cite{G}, whereas Staic used his secondary case to investigate deformations of $A$ with a nontrivial $B$-algebra structure. This structure is induced by a morphism of $\mathbbm{k}$-algebras $\varepsilon:B\longrightarrow A$, and we encode this in the form of a triple $(A,B,\varepsilon)$.

The secondary Hochschild cohomology has many similar properties to that of the usual Hochschild cohomology (see \cite{CSS}, \cite{LSS}, and \cite{SS}, among others). One that we are interested in is with the usual Hochschild cohomology and the set of derivations.

Our goal in this paper is to study how the secondary Hochschild cohomology associated to the triple $(A,B,\varepsilon)$ relates to a generalization of the set of derivations. Section \ref{sec2} provides all the necessary preliminary information, making this paper as self-contained as possible. Next, in Section \ref{sec3}, we introduce the notion of secondary derivations. Consequently, we will then showcase computations in low dimension for both the secondary Hochschild and cyclic cohomologies (see Theorem \ref{HH1ABCo} and Corollary \ref{HC1ABCo}, respectively). Finally, Section \ref{sec4} discusses a universal property for these secondary derivations, and highlights how they can be used to prove that secondary K\"ahler differentials from \cite{L2} are indeed nontrivial.

%%%%%%%%%%%%%%%%%%%%%%%%%%%%%%%%%%%%%%%%%%%%%%%%%%%%%%%%%%%%%%%%%%%%%%%%%%%%
\section{Preliminaries}\label{sec2}
%%%%%%%%%%%%%%%%%%%%%%%%%%%%%%%%%%%%%%%%%%%%%%%%%%%%%%%%%%%%%%%%%%%%%%%%%%%%

As is customary, we set $\mathbbm{k}$ to be a field containing $\mathbb{Q}$, and we define $\otimes=\otimes_\mathbbm{k}$ unless otherwise stated. Next we fix all $\mathbbm{k}$-algebras to be necessarily associative with a multiplicative unit. Concerning secondary Hochschild cohomology, it is taken over a triple, which is described as follows:

\begin{definition}(\cite{S})
We call $(A,B,\varepsilon)$ a \textbf{triple} if $A$ is a $\mathbbm{k}$-algebra, $B$ is a commutative $\mathbbm{k}$-algebra, and $\varepsilon:B\longrightarrow A$ is a morphism of $\mathbbm{k}$-algebras such that $\varepsilon(B)\subseteq\mathcal{Z}(A)$. Call $(A,B,\varepsilon)$ a \textbf{commutative triple} if $A$ is also commutative.
\end{definition}

Triples have now been studied quite broadly, and examples, extensions, and applications can be found in a number of places (see \cite{BBN}, \cite{C0}, \cite{CHL}, \cite{CL}, \cite{CLS}, \cite{CSS}, \cite{DMN}, \cite{L1}, or \cite{SS}, for example). When convenient and appropriate, we will denote a triple by $\mathcal{T}=(A,B,\varepsilon)$, so as to make it easier with notation.

%%%%%%%%%%%%%%%%%%%%%%%%%%%%%%%%%%%%%%%%%%%%%%%%%%%%%%%%%%%%%%%%%%%%%%%%%%%%
\subsection{The secondary Hochschild cohomology}\label{shcsec}
%%%%%%%%%%%%%%%%%%%%%%%%%%%%%%%%%%%%%%%%%%%%%%%%%%%%%%%%%%%%%%%%%%%%%%%%%%%%

Next we recall the secondary Hochschild cohomology, which was introduced by Staic in \cite{S} in 2016. In that paper, Staic studied the secondary Hochschild cohomology of the triple $(A,B,\varepsilon)$ with coefficients in $M$, which was used to study deformations of $A$ that have a nontrivial $B$-algebra structure. A few years later in \cite{LSS}, the authors established the secondary Hochschild cohomology associated to the triple $(A,B,\varepsilon)$, done through simplicial structures, many details of which can be found in \cite{L0}. This construction is what we will use here.

For notation, we will follow the convention made in \cite{LSS}. For a triple $(A,B,\varepsilon)$, we define $\overline{C}^n(A,B,\varepsilon)=\Hom_\mathbbm{k}(A^{\otimes n+1}\otimes B^{\otimes\frac{n(n+1)}{2}},\mathbbm{k})$. As is customary (see \cite{CS}, for instance), we view the elements of $A^{\otimes n+1}\otimes B^{\otimes\frac{n(n+1)}{2}}$ organized as the matrix
$$
\otimes
\begin{pmatrix}
a_0 & b_{0,1} & b_{0,2} & \cdots & b_{0,n-2} & b_{0,n-1} & b_{0,n}\\
1 & a_1 & b_{1,2} & \cdots & b_{1,n-2} & b_{1,n-1} & b_{1,n}\\
1 & 1 & a_2 & \cdots & b_{2,n-2} & b_{2,n-1} & b_{2,n}\\
\vdots & \vdots & \vdots & \ddots & \vdots & \vdots & \vdots\\
1 & 1 & 1 & \cdots & a_{n-2} & b_{n-2,n-1} & b_{n-2,n}\\
1 & 1 & 1 & \cdots & 1 & a_{n-1} & b_{n-1,n}\\
1 & 1 & 1 & \cdots & 1 & 1 & a_n\\
\end{pmatrix},
$$
where $a_i\in A$, $b_{i,j}\in B$, and $1\in\mathbbm{k}$, and consequently can then determine the coboundary maps $\partial^n:\overline{C}^n(A,B,\varepsilon)\longrightarrow \overline{C}^{n+1}(A,B,\varepsilon)$ by
\begin{align*}
\partial^nf&\Big(\otimes
\begin{pmatrix}
a_0 & b_{0,1} & \cdots & b_{0,n} & b_{0,n+1}\\
1 & a_1 & \cdots & b_{1,n} & b_{1,n+1}\\
\vdots & \vdots & \ddots & \vdots & \vdots\\
1 & 1 & \cdots & a_{n} & b_{n,n+1}\\
1 & 1 & \cdots & 1 & a_{n+1}\\
\end{pmatrix}\Big)\\
&=\sum_{i=0}^{n}(-1)^if\Big(\otimes
\begin{pmatrix}
a_0 & b_{0,1} & \cdots & b_{0,i}b_{0,i+1} & \cdots & b_{0,n} & b_{0,n+1}\\
1 & a_1 & \cdots & b_{1,i}b_{1,i+1} & \cdots & b_{1,n} & b_{1,n+1}\\
\vdots & \vdots & \ddots & \vdots & \ddots & \vdots & \vdots\\
1 & 1 & \cdots & a_i\varepsilon(b_{i,i+1})a_{i+1} & \cdots & b_{i,n}b_{i+1,n} & b_{i,n+1}b_{i+1,n+1}\\
\vdots & \vdots & \ddots & \vdots & \ddots & \vdots & \vdots\\
1 & 1 & \cdots & 1 & \cdots & a_{n} & b_{n,n+1}\\
1 & 1 & \cdots & 1 & \cdots & 1 & a_{n+1}\\
\end{pmatrix}\Big)\\
&~~~+(-1)^{n+1}f\Big(\otimes
\begin{pmatrix}
a_{n+1}\varepsilon(b_{0,n+1})a_0 & b_{1,n+1}b_{0,1} & \cdots & b_{n-1,n+1}b_{0,n-1} & b_{n,n+1}b_{0,n}\\
1 & a_1 & \cdots & b_{1,n-1} & b_{1,n}\\
\vdots & \vdots & \ddots & \vdots & \vdots\\
1 & 1 & \cdots & a_{n-1} & b_{n-1,n}\\
1 & 1 & \cdots & 1 & a_{n}\\
\end{pmatrix}\Big)
\end{align*}
for all $n\geq0$. It was shown in \cite{LSS} that $\partial^{n+1}\circ\partial^n=0$, and we denote the induced complex by $\overline{\mathbf{C}}^\bullet(A,B,\varepsilon)$.

\begin{definition}(\cite{LSS})
The cohomology of the chain complex $\overline{\mathbf{C}}^\bullet(A,B,\varepsilon)$ is called the \textbf{secondary Hochschild cohomology associated to the triple $(A,B,\varepsilon)$}, and this is denoted by $HH^*(A,B,\varepsilon)$.
\end{definition}

\begin{remark}
By taking $B=\mathbbm{k}$, one can easily see how the secondary Hochschild cohomology associated to the triple $(A,B,\varepsilon)$ reduces to the usual Hochschild cohomology associated to $A$. In notation, we have that $HH^n(A,\mathbbm{k},\varepsilon)=HH^n(A)$ for all $n\geq0$.
\end{remark}

Most meaningfully, we will focus on the chain complex $\overline{\mathbf{C}}^\bullet(A,B,\varepsilon)$ in low dimension. Specifically, we have
$$
0\longrightarrow\Hom_\mathbbm{k}(A,\mathbbm{k})\xrightarrow{~\partial^0~}\Hom_\mathbbm{k}(A^{\otimes2}\otimes B,\mathbbm{k})\xrightarrow{~\partial^1~}\Hom_\mathbbm{k}(A^{\otimes3}\otimes B^{\otimes3},\mathbbm{k})\xrightarrow{~\partial^2~}\ldots
$$
such that
$$
\partial^0f\Big(\otimes\begin{pmatrix} a & \alpha\\ 1 & b\\ \end{pmatrix}\Big)=f(a\varepsilon(\alpha)b)-f(b\varepsilon(\alpha)a)
$$
and
$$
\partial^1f\Big(\otimes\begin{pmatrix} a & \alpha & \beta\\ 1 & b & \gamma\\ 1 & 1 & c\\ \end{pmatrix}\Big)=f\Big(\otimes\begin{pmatrix} a\varepsilon(\alpha)b & \beta\gamma\\ 1 & c\\ \end{pmatrix}\Big)-f\Big(\otimes\begin{pmatrix} a & \alpha\beta\\ 1 & b\varepsilon(\gamma)c\\ \end{pmatrix}\Big)+f\Big(\otimes\begin{pmatrix} c\varepsilon(\beta)a & \alpha\gamma\\ 1 & b\\ \end{pmatrix}\Big).
$$

%%%%%%%%%%%%%%%%%%%%%%%%%%%%%%%%%%%%%%%%%%%%%%%%%%%%%%%%%%%%%%%%%%%%%%%%%%%%
\subsection{The secondary cyclic cohomology}\label{seccyc}
%%%%%%%%%%%%%%%%%%%%%%%%%%%%%%%%%%%%%%%%%%%%%%%%%%%%%%%%%%%%%%%%%%%%%%%%%%%%

Connes introduced cyclic cohomology in \cite{C}, and one can see \cite{L} for more details. Here we recall the analogous secondary version that was introduced in \cite{LSS}. We start by considering the permutation $\lambda=(0, 1, 2,\ldots, n)$ and the cyclic group $C_{n+1}=\langle\lambda\rangle$. Notice that $C_{n+1}$ has a natural action on $\overline{C}^n(A,B,\varepsilon)$ given by
$$
\lambda f\Big(\otimes
\begin{pmatrix}
a_0 & b_{0,1} & \cdots & b_{0,n-1} & b_{0,n}\\
1 & a_1 & \cdots & b_{1,n-1} & b_{1,n}\\
\vdots & \vdots & \ddots & \vdots & \vdots\\
1 & 1 & \cdots & a_{n-1} & b_{n-1,n}\\
1 & 1 & \cdots & 1 & a_n\\
\end{pmatrix}\Big)
=(-1)^nf\Big(\otimes
\begin{pmatrix}
a_n & b_{0,n} & \cdots & b_{n-2,n} & b_{n-1,n}\\
1 & a_0 & \cdots & b_{0,n-2} & b_{0,n-1}\\
\vdots & \vdots & \ddots & \vdots & \vdots\\
1 & 1 & \cdots & a_{n-2} & b_{n-2,n-1}\\
1 & 1 & \cdots & 1 & a_{n-1}\\
\end{pmatrix}\Big).
$$

We can then consider the new complex built by setting $\overline{C}_{\lambda}^n(A,B,\varepsilon)=\Ker(1-\lambda)$, where we continue to employ the maps $\partial^n$ from Section \ref{shcsec}. We then get the following definition.

\begin{definition}(\cite{LSS})
The cohomology of the chain complex $\overline{\mathbf{C}}_\lambda^\bullet(A,B\varepsilon)$ is called the \textbf{secondary cyclic cohomology associated to the triple $(A,B,\varepsilon)$}, and this is denoted by $HC^*(A,B,\varepsilon)$.
\end{definition}

As is predictable, we now recall Connes' long exact sequence for the secondary case.

\begin{theorem}\label{SCLESCo}\emph{(\cite{LSS})}
Let $\mathbbm{k}$ be a field of characteristic zero. For a triple $(A,B,\varepsilon)$, we have the long exact sequence
$$\ldots\xrightarrow{I^*}HH^n(A,B,\varepsilon)\xrightarrow{B^*}HC^{n-1}(A,B,\varepsilon)\xrightarrow{S^*}HC^{n+1}(A,B,\varepsilon)\xrightarrow{I^*}HH^{n+1}(A,B,\varepsilon)\xrightarrow{B^*}\ldots$$
\end{theorem}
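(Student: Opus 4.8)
The plan is to follow the classical route used to establish Connes' long exact sequence in the ordinary case (as in \cite{L}), adapted to the simplicial/cyclic framework for triples developed in \cite{LSS}. First I would set up the short exact sequence of complexes that underlies everything: for each $n$, consider the map $1-\lambda:\overline{C}^n(A,B,\varepsilon)\longrightarrow\overline{C}^n(A,B,\varepsilon)$. Since $\mathbbm{k}$ has characteristic zero, the operator $N=1+\lambda+\lambda^2+\cdots+\lambda^n$ acting on $\overline{C}^n$ provides a splitting device: one checks the standard identities $(1-\lambda)N=N(1-\lambda)=0$, that $\Ker(1-\lambda)=\overline{C}^n_\lambda$ by definition, and that on the quotient complex $\overline{C}^n/\overline{C}^n_\lambda$ the operator $N$ induces an isomorphism onto $\overline{C}^n_\lambda$ (here the $\frac{1}{n+1}$ appearing forces the hypothesis $\mathrm{char}\,\mathbbm{k}=0$, or rather $\mathbbm{k}\supseteq\mathbb{Q}$ as already assumed in Section \ref{sec2}). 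The key compatibility to verify is that $1-\lambda$ and $N$ are morphisms of complexes up to the expected sign juggling, i.e. that $\partial^n$ interacts with $\lambda$ as it does in the cyclic category; this is exactly the computation that $\overline{\mathbf{C}}^\bullet_\lambda(A,B,\varepsilon)$ is a subcomplex, which was already recorded in \cite{LSS}, together with the dual statement for the quotient.

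Next I would assemble the bicomplex (or rather the Connes bicomplex / $(b,B)$-type resolution) whose rows are alternately the complex $(\overline{\mathbf{C}}^\bullet,\partial)$ with the full coboundary and the "acyclic" complex built from $1-\lambda$ and $N$. Concretely, one forms the first-quadrant double complex with $\overline{C}^q$ in every spot of row $p$, horizontal differentials alternating between $1-\lambda$ and $N$, and vertical differential the secondary Hochschild coboundary $\partial$ (with appropriate sign twist $\partial'$ in the columns where it is needed so that the squares commute). Because $\mathbbm{k}\supseteq\mathbb{Q}$, every other column is exact — this is the standard "killing contractible complexes" argument: the complex $\cdots\xrightarrow{1-\lambda}\overline{C}^q\xrightarrow{N}\overline{C}^q\xrightarrow{1-\lambda}\overline{C}^q\to\cdots$ has a contracting homotopy given by $\tfrac{1}{q+1}(\text{id})$ on the relevant pieces. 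Taking total complexes and running the two spectral sequences, one filtration computes $HC^*(A,B,\varepsilon)$ (up to the usual degree shift) and the other produces $HH^*(A,B,\varepsilon)$, and the edge maps of the spectral sequence are precisely $I^*$, $B^*$, $S^*$.

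Alternatively — and this is probably the cleanest write-up — I would short-cut the bicomplex by directly producing a short exact sequence of cochain complexes
$$
0\longrightarrow \overline{\mathbf{C}}^{\bullet}_\lambda(A,B,\varepsilon)\xrightarrow{\;I\;}\overline{\mathbf{C}}^{\bullet}(A,B,\varepsilon)\xrightarrow{\;1-\lambda\;}\overline{\mathbf{C}}^{\bullet}(A,B,\varepsilon)/\overline{\mathbf{C}}^{\bullet}_\lambda(A,B,\varepsilon)\longrightarrow 0,
$$
identify the quotient complex, via $N$, with a shifted copy of $\overline{\mathbf{C}}^{\bullet}_\lambda(A,B,\varepsilon)$ up to homotopy (again using $\mathbb{Q}\subseteq\mathbbm{k}$), and then feed this into the snake lemma / long exact sequence in cohomology. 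The connecting homomorphism becomes $S^*$ after this identification, $I$ induces $I^*$, and the composite involving $N$ and the projection induces $B^*$; matching degrees gives exactly the stated sequence $\cdots\xrightarrow{I^*}HH^n\xrightarrow{B^*}HC^{n-1}\xrightarrow{S^*}HC^{n+1}\xrightarrow{I^*}HH^{n+1}\xrightarrow{B^*}\cdots$.

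I expect the main obstacle to be the bookkeeping in the middle step: verifying that the secondary coboundary $\partial^n$ — with its matrix-of-tensors formalism, the $\varepsilon$-twisted multiplications $a_i\varepsilon(b_{i,i+1})a_{i+1}$, and the wrap-around term $a_{n+1}\varepsilon(b_{0,n+1})a_0$ — genuinely commutes (up to the prescribed signs) with the cyclic operator $\lambda$ and with $N$, so that the quotient complex really is a complex and the identification with a degree-shifted $\overline{\mathbf{C}}^{\bullet}_\lambda$ is a chain map. Much of this is implicit in \cite{LSS} where the cyclic subcomplex is defined, but the homotopy making every second column contractible needs to be written out carefully, and the sign conventions coming from the $(-1)^n$ in the definition of $\lambda f$ must be tracked so that the final indices $n-1$, $n$, $n+1$ land correctly. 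Once the short exact sequence and the homotopy equivalence are in place, the long exact sequence is a formal consequence.
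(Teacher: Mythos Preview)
Your proposal is a reasonable and essentially correct sketch of the classical Connes argument transplanted to the secondary setting, but there is nothing to compare it to: the paper does not prove Theorem~\ref{SCLESCo} at all. The theorem is stated in Section~\ref{seccyc} purely as a recalled result from \cite{LSS}, with the citation serving in lieu of proof; the paper's only use of it is to feed the low-degree portion of the sequence into Example~\ref{HH0ex} and Corollary~\ref{HC1ABCo}.

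So your write-up is not wrong, just more than the paper asks for. If anything, it is a plausible outline of what the argument in \cite{LSS} itself presumably looks like (bicomplex with alternating $1-\lambda$ and $N$ columns, contractibility of the odd columns over $\mathbb{Q}$, spectral sequence or short exact sequence of complexes plus snake lemma). The one caution is that your ``alternative'' route via a single short exact sequence is slightly glib: the identification of the quotient $\overline{\mathbf{C}}^\bullet/\overline{\mathbf{C}}^\bullet_\lambda$ with a shift of $\overline{\mathbf{C}}^\bullet_\lambda$ is not literally a chain isomorphism but goes through the acyclicity of the $(1-\lambda,N)$-rows, so in practice one still ends up doing the bicomplex argument or the Tsygan/Loday--Quillen $SBI$ machinery. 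That is a packaging issue rather than a gap.
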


%%%%%%%%%%%%%%%%%%%%%%%%%%%%%%%%%%%%%%%%%%%%%%%%%%%%%%%%%%%%%%%%%%%%%%%%%%%%
\subsection{Secondary K\"ahler differentials}
%%%%%%%%%%%%%%%%%%%%%%%%%%%%%%%%%%%%%%%%%%%%%%%%%%%%%%%%%%%%%%%%%%%%%%%%%%%%

This subsection follows \cite{L2}, where in that paper we saw how the secondary Hochschild homology associated to a commutative triple corresponded to a generalization of K\"ahler differentials.

\begin{definition}(\cite{L2})\label{skdiff}
For a commutative triple $\mathcal{T}=(A,B,\varepsilon)$, denote $\Omega_{\mathcal{T}|\mathbbm{k}}^1$ to be the left $B\otimes A$-module of \textbf{secondary K\"ahler differentials} generated by the $\mathbbm{k}$-linear symbols $d(\alpha\otimes a)$ for $\alpha\in B$ and $a\in A$ with the module structure of $(\alpha\otimes a)\cdot d(\beta\otimes b)=a\varepsilon(\alpha)d(\beta\otimes b)$, along with the relations:
\begin{enumerate}[(i)]
\item\label{og1} $d(\lambda(\alpha\otimes a)+\mu(\beta\otimes b))=\lambda d(\alpha\otimes a)+\mu d(\beta\otimes b)$,
\item\label{og2} $d((\alpha\otimes a)(\beta\otimes b))=a\varepsilon(\alpha)d(\beta\otimes b)+b\varepsilon(\beta)d(\alpha\otimes a)$, and
\item\label{newcon} $d(\alpha\otimes1)+d(\alpha\otimes1)=d(1\otimes\varepsilon(\alpha))$
\end{enumerate}
for all $a,b\in A$, $\alpha,\beta\in B$, and $\lambda,\mu\in\mathbbm{k}$.
\end{definition}

One of the goals of this paper is to showcase how the secondary K\"ahler differentials are indeed nontrivial by way of derivations.

%%%%%%%%%%%%%%%%%%%%%%%%%%%%%%%%%%%%%%%%%%%%%%%%%%%%%%%%%%%%%%%%%%%%%%%%%%%%
\subsection{The universal derivation}\label{ssecuni}
%%%%%%%%%%%%%%%%%%%%%%%%%%%%%%%%%%%%%%%%%%%%%%%%%%%%%%%%%%%%%%%%%%%%%%%%%%%%

Finally we recall some facts related to derivations that can be found in such foundational texts like \cite{L} or \cite{W}. These results are classical and commonly recounted as folklore. Furthermore, the goal of Section \ref{sec4} will be to get similar results as these, but for the secondary case.

\begin{definition}\label{universal}
The derivation $D:A\longrightarrow M$ is said to be \textbf{universal} if for any other derivation $\delta:A\longrightarrow N$ there exists a unique $A$-linear map $\varphi:M\longrightarrow N$ such that $\delta=\varphi\circ D$. In other words, the following diagram commutes:
$$
\begin{tikzpicture}[scale=3]
\node (a) at (0,.5) {$A$};
\node (b) at (1,0) {$N$};
\node (c) at (1,1) {$M$};
\path[->,font=\small,>=angle 90]
(a) edge node [above] {$\delta$} (b)
(a) edge node [above] {$D$} (c);
\path[->,dashed,font=\small,>=angle 90]
(c) edge node [right] {$\exists!\varphi$} (b);
\end{tikzpicture}
$$
\end{definition}

\begin{proposition}\label{UniProp}
For $A$ commutative, we have that the map $d:A\longrightarrow\Omega_{A|\mathbbm
{k}}^1$ given by
$$a\longmapsto d(a)$$
is the universal derivation.
\end{proposition}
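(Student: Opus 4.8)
The plan is to verify the universal property of Definition \ref{universal} directly from the standard construction of $\Omega_{A|\mathbbm{k}}^1$ as the free $A$-module on symbols $d(a)$ for $a \in A$ modulo the submodule generated by the relations $d(\lambda a + \mu b) - \lambda d(a) - \mu d(b)$ and $d(ab) - a\,d(b) - b\,d(a)$, with $d \colon A \to \Omega_{A|\mathbbm{k}}^1$ the evident map $a \mapsto d(a)$. First I would check that $d$ is indeed a $\mathbbm{k}$-linear derivation: $\mathbbm{k}$-linearity and the Leibniz rule are precisely the imposed relations, so this is immediate. (I would also note in passing that $d(1) = 0$, since $d(1) = d(1\cdot 1) = 1\cdot d(1) + 1\cdot d(1) = 2\,d(1)$ and $\operatorname{char}\mathbbm{k} = 0$, though this fact is not strictly needed for universality.)

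Next, given an arbitrary derivation $\delta \colon A \to N$ into an $A$-module $N$, I would construct the $A$-linear map $\varphi \colon \Omega_{A|\mathbbm{k}}^1 \to N$. Let $F$ denote the free $A$-module on the set of symbols $\{\,\widetilde{d}(a) : a \in A\,\}$, so that $\Omega_{A|\mathbbm{k}}^1 = F/K$ where $K$ is the submodule generated by the relation elements above. Define $\widetilde{\varphi} \colon F \to N$ on the free basis by $\widetilde{d}(a) \mapsto \delta(a)$ and extend $A$-linearly. To descend to the quotient I must show $\widetilde{\varphi}(K) = 0$, i.e. that $\widetilde{\varphi}$ kills each generator of $K$: $\widetilde{\varphi}(\widetilde{d}(\lambda a + \mu b) - \lambda\widetilde{d}(a) - \mu\widetilde{d}(b)) = \delta(\lambda a + \mu b) - \lambda\delta(a) - \mu\delta(b) = 0$ by $\mathbbm{k}$-linearity of $\delta$, and $\widetilde{\varphi}(\widetilde{d}(ab) - a\widetilde{d}(b) - b\widetilde{d}(a)) = \delta(ab) - a\delta(b) - b\delta(a) = 0$ by the Leibniz rule for $\delta$. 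Hence $\widetilde{\varphi}$ factors through a well-defined $A$-linear map $\varphi \colon \Omega_{A|\mathbbm{k}}^1 \to N$ with $\varphi(d(a)) = \delta(a)$, i.e. $\delta = \varphi \circ D$.

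Finally I would establish uniqueness: if $\psi \colon \Omega_{A|\mathbbm{k}}^1 \to N$ is any $A$-linear map with $\psi \circ d = \delta$, then $\psi$ and $\varphi$ agree on every generator $d(a)$ of $\Omega_{A|\mathbbm{k}}^1$ as an $A$-module, since $\psi(d(a)) = \delta(a) = \varphi(d(a))$; as both maps are $A$-linear and the elements $d(a)$ generate $\Omega_{A|\mathbbm{k}}^1$ over $A$, they coincide. This verifies the diagram in Definition \ref{universal} commutes with a unique $\varphi$, so $d$ is the universal derivation.

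There is no serious obstacle here — the argument is the classical folklore verification and every step is routine. The only point requiring a modicum of care is being explicit about the presentation of $\Omega_{A|\mathbbm{k}}^1$ (free module modulo relations) so that the ``factoring through the quotient'' step is rigorous rather than hand-waved; the commutativity of $A$ is used implicitly to ensure the Leibniz-rule generators are the correct ones and that $N$ being an $A$-module suffices (no left/right subtleties), but it plays no deeper role.
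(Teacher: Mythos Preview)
The paper does not actually supply a proof of Proposition~\ref{UniProp}; it is merely recalled in Section~\ref{ssecuni} as a classical folklore fact with references to \cite{L} and \cite{W}. Your argument is correct and is precisely the standard verification. For comparison, the paper does prove the secondary analogue (Proposition~\ref{SDerDiff}), and its approach is the same as yours but far terser: it simply observes that the defining relations of $\Omega^1$ coincide with the derivation axioms, and then asserts that the map $\varphi$ determined by $d(\alpha\otimes a)\mapsto\delta(\alpha\otimes a)$ exists and is unique ``by construction.'' Your version spells out the free-module-modulo-relations presentation, the well-definedness check on the quotient, and the uniqueness on generators, which is more rigorous than what the paper offers for the parallel result.
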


\begin{proposition}\label{DerDiffHomo}
For $A$ commutative, we have that
$$\Hom_A(\Omega_{A|\mathbbm{k}}^1,M)\longrightarrow\Der_\mathbbm{k}(A,M)$$
given by $f\longmapsto f\circ d$ is an isomorphism.
\end{proposition}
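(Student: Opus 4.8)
The plan is to exhibit the map $\Phi\colon\Hom_A(\Omega_{A|\mathbbm{k}}^1,M)\longrightarrow\Der_\mathbbm{k}(A,M)$, $f\longmapsto f\circ d$, as a $\mathbbm{k}$-linear bijection, and then upgrade the bijection to an isomorphism of $A$-modules. First I would check that $\Phi$ is well defined: given an $A$-linear map $f\colon\Omega_{A|\mathbbm{k}}^1\longrightarrow M$, the composite $f\circ d$ is $\mathbbm{k}$-linear (as a composite of $\mathbbm{k}$-linear maps), and it satisfies the Leibniz rule because for $a,a'\in A$ we have $(f\circ d)(aa')=f\big(a\,d(a')+a'\,d(a)\big)=a\,(f\circ d)(a')+a'\,(f\circ d)(a)$, using that $d$ is a derivation (Proposition \ref{UniProp}) together with the $A$-linearity of $f$. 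Hence $f\circ d\in\Der_\mathbbm{k}(A,M)$, and $\Phi$ is visibly additive and $\mathbbm{k}$-linear in $f$.

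Surjectivity is immediate from the universal property: given $\delta\in\Der_\mathbbm{k}(A,M)$, Proposition \ref{UniProp} identifies $d\colon A\longrightarrow\Omega_{A|\mathbbm{k}}^1$ as the universal derivation, so there exists a (unique) $A$-linear $\varphi$ with $\delta=\varphi\circ d=\Phi(\varphi)$. For injectivity, suppose $\Phi(f)=f\circ d=0$; since $\Omega_{A|\mathbbm{k}}^1$ is generated as an $A$-module by the symbols $d(a)$, $a\in A$, and $f(d(a))=0$ for every such generator, $A$-linearity forces $f=0$. (Equivalently, injectivity is exactly the uniqueness clause in Definition \ref{universal}.) Thus $\Phi$ is a $\mathbbm{k}$-linear bijection.

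Finally I would record that both sides carry $A$-module structures --- on $\Hom_A(\Omega_{A|\mathbbm{k}}^1,M)$ by $(a\cdot f)(\omega)=a\,f(\omega)$, and on $\Der_\mathbbm{k}(A,M)$ by $(a\cdot\delta)(x)=a\,\delta(x)$, which is again a derivation precisely because $A$ is commutative --- and that $\Phi$ intertwines them, since $\Phi(a\cdot f)(x)=a\,f(d(x))=(a\cdot\Phi(f))(x)$. Therefore $\Phi$ is an isomorphism of $A$-modules. There is no genuine obstacle here: the statement is essentially a reformulation of the universal property of Proposition \ref{UniProp}, and the only point requiring a moment's care is checking that the $A$-action transported across $\Phi$ agrees with the natural one on derivations, which is where commutativity of $A$ is used.
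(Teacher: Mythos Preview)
Your argument is correct and is the standard one. Note, however, that the paper does not actually prove Proposition~\ref{DerDiffHomo}: it is recalled in Section~\ref{ssecuni} as a classical folklore result (with references to Loday and Weibel) and stated without proof. The closest comparison is the paper's proof of the secondary analogue, Proposition~\ref{SDerDiffHomo}, which proceeds exactly as you do---verify that $f\circ d$ satisfies the derivation axioms using $A$-linearity of $f$, then invoke the universal property of $d$ (Proposition~\ref{UniProp}/\ref{SDerDiff}) for bijectivity---though the paper is terser and does not separately discuss the $A$-module structure as you do in your final paragraph.
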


%%%%%%%%%%%%%%%%%%%%%%%%%%%%%%%%%%%%%%%%%%%%%%%%%%%%%%%%%%%%%%%%%%%%%%%%%%%%
\section{Secondary Derivations}\label{sec3}
%%%%%%%%%%%%%%%%%%%%%%%%%%%%%%%%%%%%%%%%%%%%%%%%%%%%%%%%%%%%%%%%%%%%%%%%%%%%

In the usual case, one has the classic result of $HH^1(A)\cong\Der_\mathbbm{k}(A,A^*)$, where $HH^1(A)$ denotes the Hochschild cohomology of $A$ in dimension $1$, and $\Der_\mathbbm{k}(A,A^*)$ denotes the set of all $\mathbbm{k}$-linear derivations from $A$ to $A^*$. Furthermore, one can also conclude that $HC^1(A)\cong\Der_\mathbbm{k}^1(A,A^*)$, where the superscript $1$ means we add the condition $D(a\otimes b)=-D(b\otimes a)$ to the aforementioned set $\Der_\mathbbm{k}(A,A^*)$.

The main goal of this section is to get results in the secondary case that corresponds to the above.

\begin{definition}\label{SDeriv}
A \textbf{secondary derivation} of the commutative triple $\mathcal{T}=(A,B,\varepsilon)$ with values in $M$ is a $\mathbbm{k}$-linear map $D:B\otimes A\longrightarrow M$ with the symmetric bimodule structure of $(\alpha\otimes a)\cdot D(\beta\otimes b)=a\varepsilon(\alpha)D(\beta\otimes b)=D(\beta\otimes b)a\varepsilon(\alpha)=D(\beta\otimes b)\cdot(\alpha\otimes a)$ such that
\begin{enumerate}[(i)]
    \item $D(\lambda(\alpha\otimes a)+\mu(\beta\otimes b))=\lambda D(\alpha\otimes a)+\mu D(\beta\otimes b)$,
    \item $D((\alpha\otimes a)(\beta\otimes b))=a\varepsilon(\alpha)D(\beta\otimes b)+D(\alpha\otimes a)b\varepsilon(\beta)$, and
    \item $D(\alpha\otimes1)+D(\alpha\otimes1)=D(1\otimes\varepsilon(\alpha))$
\end{enumerate}
for all $a,b\in A$, $\alpha,\beta\in B$, and $\lambda,\mu\in\mathbbm{k}$. The set of all such secondary derivations is denoted by $\Der_\mathbbm{k}(\mathcal{T},M)$.
\end{definition}

Notice that we have $(\alpha\otimes a)(\beta\otimes b)=\alpha\beta\otimes ab$, and therefore
$$
D(\alpha\beta\otimes ab)=a\varepsilon(\alpha)D(\beta\otimes b)+D(\alpha\otimes a)b\varepsilon(\beta).
$$
As consequence, it is then immediate that
\begin{align*}
D(1\otimes ab)&=aD(1\otimes b)+D(1\otimes a)b,\\
D(\alpha\beta\otimes1)&=\varepsilon(\alpha)D(\beta\otimes1)+D(\alpha\otimes1)\varepsilon(\beta),\text{and}\\
D(\alpha\otimes a)&=\varepsilon(\alpha)D(1\otimes a)+D(\alpha\otimes1)a.
\end{align*}

One could wonder if secondary derivations have a Lie algebra structure. This could be an avenue worth pursuing in future work.

\begin{remark}
It is easy to see that $D(1\otimes1)=0$, and hence $D(\lambda\otimes\mu)=0$ for all $\lambda,\mu\in\mathbbm{k}$. Furthermore, we note that the first two conditions of Definition \ref{SDeriv} make $D$ a derivation of the commutative algebra $B\otimes A$ with a symmetric bimodule structure, while the third condition is additional.
\end{remark}

\begin{remark}
Under the identification of $B=\mathbbm{k}$, notice that secondary derivations become the usual derivations $\Der_\mathbbm{k}(A,M)$ for the commutative $\mathbbm{k}$-algebra $A$ into the $A$-symmetric bimodule $M$. In particular, one can see that the final condition in Definition \ref{SDeriv} becomes trivial when $B=\mathbbm{k}$.
\end{remark}

\begin{example}\label{goodex}
With $A=M=\mathbbm{k}[x]$, $B=\mathbbm{k}[x^2]$, and $\iota:\mathbbm{k}[x^2]\longrightarrow\mathbbm{k}[x]$ given by inclusion, we claim that for the commutative triple $\mathcal{T}=(\mathbbm{k}[x],\mathbbm{k}[x^2],\iota)$, its corresponding set of secondary derivations $\Der_\mathbbm{k}(\mathcal{T},\mathbbm{k}[x])$ is nontrivial. Note $D\in\Der_\mathbbm{k}(\mathcal{T},\mathbbm{k}[x])$ when we define $D(g\otimes1)=g'/2$, $D(1\otimes h)=h'$, and $D(g\otimes h)=gh'+hg'/2$ for $g\in\mathbbm{k}[x^2]$ and $h\in\mathbbm{k}[x]$.
\end{example}

Before we turn our attention to the case when $M=A^*=\Hom_\mathbbm{k}(A,\mathbbm{k})$, there are a couple straightforward computations and an observation that will prove useful.

\begin{example}\label{HH0ex}
For any triple $(A,B,\varepsilon)$, it is easy to see that
$$
HH^0(A,B,\varepsilon)=HH^0(A)=(A^*)^A=\{f:A\longrightarrow\mathbbm{k}~|~f(ab)=f(ba)\text{~for all~}a,b\in A\}.
$$
Furthermore, using Theorem \ref{SCLESCo}, it is immediate that $HC^0(A,B,\varepsilon)\cong HH^0(A,B,\varepsilon)$. Of particular interest is when we have a commutative triple $(A,B,\varepsilon)$, we get that the map $\partial^0\equiv0$. This implies that $\Ig(\partial^0)$ is trivial.
\end{example}

\begin{theorem}\label{HH1ABCo}
For a commutative triple $\mathcal{T}=(A,B,\varepsilon)$, we have that
$$HH^1(A,B,\varepsilon)\cong\Der_\mathbbm{k}(\mathcal{T},A^*).$$
\end{theorem}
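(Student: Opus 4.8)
The plan is to exhibit an explicit isomorphism at the level of cochains and then check that it descends to cohomology. Recall that $HH^1(A,B,\varepsilon)=\Ker(\partial^1)/\Ig(\partial^0)$, and by Example \ref{HH0ex} the image $\Ig(\partial^0)$ is trivial for a commutative triple, so in fact $HH^1(A,B,\varepsilon)=\Ker(\partial^1)$. Thus it suffices to identify $\Ker(\partial^1)\subseteq\Hom_\mathbbm{k}(A^{\otimes2}\otimes B,\mathbbm{k})$ with $\Der_\mathbbm{k}(\mathcal{T},A^*)$. First I would set up the correspondence: given $f\in\Hom_\mathbbm{k}(A^{\otimes2}\otimes B,\mathbbm{k})$, define $D_f:B\otimes A\longrightarrow A^*$ by $D_f(\alpha\otimes a)(b)=f\big(\otimes\begin{pmatrix} b & \alpha\\ 1 & a\\ \end{pmatrix}\big)$ (up to a fixed sign/normalization to be pinned down). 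Conversely, from $D\in\Der_\mathbbm{k}(\mathcal{T},A^*)$ one recovers $f_D$ by the same formula read backwards. This assignment is manifestly $\mathbbm{k}$-linear and bijective as a map of vector spaces $\Hom_\mathbbm{k}(A^{\otimes 2}\otimes B,\mathbbm{k})\to\Hom_\mathbbm{k}(B\otimes A,A^*)$, since $\Hom_\mathbbm{k}(A^{\otimes2}\otimes B,\mathbbm{k})\cong\Hom_\mathbbm{k}(B\otimes A,\Hom_\mathbbm{k}(A,\mathbbm{k}))$ by tensor--hom adjunction.

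The heart of the argument is to show that $f\in\Ker(\partial^1)$ if and only if $D_f$ satisfies the three defining conditions of a secondary derivation (Definition \ref{SDeriv}), including the symmetric bimodule compatibility. Condition (i) is automatic from $\mathbbm{k}$-linearity of $f$. For the remaining conditions I would unwind $\partial^1 f=0$: applying the displayed formula for $\partial^1$ to the matrix $\begin{pmatrix} a & \alpha & \beta\\ 1 & b & \gamma\\ 1 & 1 & c\\ \end{pmatrix}$ and evaluating, the equation $\partial^1 f(\cdots)=0$ becomes
$$
f\Big(\otimes\begin{pmatrix} a\varepsilon(\alpha)b & \beta\gamma\\ 1 & c\\ \end{pmatrix}\Big)+f\Big(\otimes\begin{pmatrix} c\varepsilon(\beta)a & \alpha\gamma\\ 1 & b\\ \end{pmatrix}\Big)=f\Big(\otimes\begin{pmatrix} a & \alpha\beta\\ 1 & b\varepsilon(\gamma)c\\ \end{pmatrix}\Big).
$$
Specializing the $B$-entries (e.g. $\beta=\gamma=1$, or $\alpha=1$) and using that $\varepsilon(B)\subseteq\mathcal{Z}(A)$ together with commutativity of $A$, this single identity should split into exactly the Leibniz rule (ii) and the $\varepsilon$-relation (iii) for $D_f$, while the bimodule symmetry follows because $A$ is commutative and $M=A^*$ carries the natural symmetric $A$-bimodule structure $(a\cdot g\cdot a')(x)=g(axa')$. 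Conversely, one checks that if $D_f$ is a secondary derivation then reassembling conditions (ii) and (iii) reproduces $\partial^1 f=0$ on all matrices, since a general $B$-entry is a $\mathbbm{k}$-linear combination of products and the relations are bilinear.

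The main obstacle I anticipate is bookkeeping: matching signs and the precise placement of the arguments $a,b,c$ versus $\alpha,\beta,\gamma$ so that the three terms of $\partial^1 f$ line up cleanly with the two terms of the Leibniz rule plus the extra $\varepsilon$-term. In particular one must verify that the ``middle'' term of $\partial^1 f$ (the one producing $a\varepsilon(\alpha\beta)$-type entries through $b\varepsilon(\gamma)c$) is genuinely the one encoding condition (iii), and that no spurious constraints beyond (i)--(iii) are imposed — this requires checking that the map $B\otimes A\to B$, $(\alpha,\beta)\mapsto$ (the relevant combination) is surjective enough, which is where $\mathbb{Q}\subseteq\mathbbm{k}$ and the factor-of-$2$ appearing in (iii) (cf. Example \ref{goodex}) come into play. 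Once the cochain-level bijection is shown to restrict to a bijection between $\Ker(\partial^1)$ and $\Der_\mathbbm{k}(\mathcal{T},A^*)$, the theorem follows immediately since $\Ig(\partial^0)=0$.
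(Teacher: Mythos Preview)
Your proposal is correct and follows essentially the same strategy as the paper: reduce $HH^1(A,B,\varepsilon)$ to $\Ker(\partial^1)$ via Example~\ref{HH0ex}, use tensor--hom adjunction to match the underlying vector spaces, and then show that the single cocycle identity \eqref{HHrel} is equivalent to conditions (ii) and (iii) of Definition~\ref{SDeriv} by specializing in one direction and reassembling in the other. For calibration, the paper's specializations are $\gamma=1_B$ (which gives the full Leibniz rule directly, rather than your $\beta=\gamma=1$) and $b=c=1_A$, $\alpha=\beta=1_B$ (which gives (iii)); the converse is carried out as a purely additive chain of substitutions using (ii) and (iii) and never divides by $2$, so your concern about needing $\mathbb{Q}\subseteq\mathbbm{k}$ here is unfounded.
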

\begin{proof}
As stated in Example \ref{HH0ex}, since $A$ is commutative, we get that $\Ig(\partial^0)$ is trivial, and so $HH^1(A,B,\varepsilon)=\Ker(\partial^1)$. Furthermore, $\Ker(\partial^1)$ consists of all $\mathbbm{k}$-linear maps from $A^{\otimes2}\otimes B$ to $\mathbbm{k}$ such that
\begin{equation}\label{HHrel}
f\Big(\otimes\begin{pmatrix} a\varepsilon(\alpha)b & \beta\gamma\\ 1 & c\\ \end{pmatrix}\Big)-f\Big(\otimes\begin{pmatrix} a & \alpha\beta\\ 1 & b\varepsilon(\gamma)c\\ \end{pmatrix}\Big)+f\Big(\otimes\begin{pmatrix} c\varepsilon(\beta)a & \alpha\gamma\\ 1 & b\\ \end{pmatrix}\Big)=0.
\end{equation}

Next, we observe that under the identification $M=A^*$, we have that $\Der_\mathbbm{k}(\mathcal{T},A^*)$ also consists of $\mathbbm{k}$-linear maps from $A^{\otimes2}\otimes B$ to $\mathbbm{k}$, but has the following two relations:
\begin{equation}\label{Derrel}
\begin{gathered}
D\Big(\otimes\begin{pmatrix} a & \alpha\beta\\1 & bc\end{pmatrix}\Big)=D\Big(\otimes\begin{pmatrix} ab\varepsilon(\alpha) & \beta\\ 1 & c\\\end{pmatrix}\Big)+D\Big(\otimes\begin{pmatrix} ca\varepsilon(\beta) & \alpha\\ 1 & b\\\end{pmatrix}\Big)\text{~~~and}\\
D\Big(\otimes\begin{pmatrix} a & \gamma\\ 1 & 1\end{pmatrix}\Big)+D\Big(\otimes\begin{pmatrix} a & \gamma\\ 1 & 1\end{pmatrix}\Big)=D\Big(\otimes\begin{pmatrix} a & 1\\ 1 & \varepsilon(\gamma)\end{pmatrix}\Big).
\end{gathered}
\end{equation}

To get our desired isomorphism, it is sufficient to show \eqref{HHrel} implies \eqref{Derrel}, as well as the converse.

Supposing \eqref{HHrel}, it is easy to see how \eqref{Derrel} is satisfied: simply take $\gamma=1_B$ to get the first equation, while taking $b=c=1_A$ and $\alpha=\beta=1_B$ obtains the second equation.

On the other hand, suppose \eqref{Derrel}.  Notice that we have
\begin{align*}
f\Big(\otimes\begin{pmatrix} a & \alpha\beta\\ 1 & bc\varepsilon(\gamma)\\ \end{pmatrix}\Big)&=f\Big(\otimes\begin{pmatrix} a\varepsilon(\alpha\beta) & 1\\ 1 & bc\varepsilon(\gamma)\\ \end{pmatrix}\Big)+f\Big(\otimes\begin{pmatrix} abc\varepsilon(\gamma) & \alpha\beta\\ 1 & 1\\ \end{pmatrix}\Big)\\
&=f\Big(\otimes\begin{pmatrix} a\varepsilon(\alpha\beta)bc & 1\\ 1 & \varepsilon(\gamma)\\ \end{pmatrix}\Big)+f\Big(\otimes\begin{pmatrix} a\varepsilon(\alpha\beta)b\varepsilon(\gamma) & 1\\ 1 & c\\ \end{pmatrix}\Big)\\
&~~~+f\Big(\otimes\begin{pmatrix} a\varepsilon(\alpha\beta)c\varepsilon(\gamma) & 1\\ 1 & b\\ \end{pmatrix}\Big)+f\Big(\otimes\begin{pmatrix} abc\varepsilon(\gamma)\varepsilon(\alpha) & \beta\\ 1 & 1\\ \end{pmatrix}\Big)\\
&~~~+f\Big(\otimes\begin{pmatrix} abc\varepsilon(\gamma)\varepsilon(\beta) & \alpha\\ 1 & 1\\ \end{pmatrix}\Big)\\
&=f\Big(\otimes\begin{pmatrix} a\varepsilon(\alpha\beta)bc & \gamma\\ 1 & 1\\ \end{pmatrix}\Big)+f\Big(\otimes\begin{pmatrix} a\varepsilon(\alpha\beta)bc & \gamma\\ 1 & 1\\ \end{pmatrix}\Big)\\
&~~~+f\Big(\otimes\begin{pmatrix} a\varepsilon(\alpha\beta)b\varepsilon(\gamma) & 1\\ 1 & c\\ \end{pmatrix}\Big)+f\Big(\otimes\begin{pmatrix} a\varepsilon(\alpha\beta)c\varepsilon(\gamma) & 1\\ 1 & b\\ \end{pmatrix}\Big)\\
&~~~+f\Big(\otimes\begin{pmatrix} abc\varepsilon(\gamma\alpha) & \beta\\ 1 & 1\\ \end{pmatrix}\Big)+f\Big(\otimes\begin{pmatrix} abc\varepsilon(\gamma\beta) & \alpha\\ 1 & 1\\ \end{pmatrix}\Big)\\
&=f\Big(\otimes\begin{pmatrix} ab\varepsilon(\alpha)c & \beta\gamma\\ 1 & 1\\ \end{pmatrix}\Big)+f\Big(\otimes\begin{pmatrix} ab\varepsilon(\alpha)\varepsilon(\beta\gamma) & 1\\ 1 & c\\ \end{pmatrix}\Big)\\
&~~~+f\Big(\otimes\begin{pmatrix} ac\varepsilon(\beta)b & \alpha\gamma\\ 1 & 1\\ \end{pmatrix}\Big)+f\Big(\otimes\begin{pmatrix} ac\varepsilon(\beta)\varepsilon(\alpha\gamma) & 1\\ 1 & b\\ \end{pmatrix}\Big)\\
&=f\Big(\otimes\begin{pmatrix} ab\varepsilon(\alpha) & \beta\gamma\\ 1 & c\\ \end{pmatrix}\Big)+f\Big(\otimes\begin{pmatrix} ca\varepsilon(\beta) & \alpha\gamma\\ 1 & b\\ \end{pmatrix}\Big),
\end{align*}
which was what we wanted. Thus, the isomorphism follows.
\end{proof}

\begin{corollary}\label{HC1ABCo}
For a commutative triple $\mathcal{T}=(A,B,\varepsilon)$, we have that
$$HC^1(A,B,\varepsilon)\cong\Der_\mathbbm{k}^1(\mathcal{T},A^*),$$
where the superscript $1$ denotes we add the condition
$$D\Big(\otimes\begin{pmatrix} a & \alpha\\ 1 & b\\ \end{pmatrix}\Big)=-D\Big(\otimes\begin{pmatrix} b & \alpha\\ 1 & a\\ \end{pmatrix}\Big)$$
to $\Der_\mathbbm{k}(\mathcal{T},A^*)$.
\end{corollary}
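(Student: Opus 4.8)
The plan is to deduce Corollary \ref{HC1ABCo} from Theorem \ref{HH1ABCo} by restricting the isomorphism $HH^1(A,B,\varepsilon)\cong\Der_\mathbbm{k}(\mathcal{T},A^*)$ to the cyclic subcomplex. Recall that $HC^1(A,B,\varepsilon)$ is computed from $\overline{\mathbf{C}}_\lambda^\bullet(A,B,\varepsilon)$ where $\overline{C}_\lambda^n=\Ker(1-\lambda)$ on $\overline{C}^n$. Since $A$ is commutative, Example \ref{HH0ex} gives $\Ig(\partial^0)$ trivial (a fortiori the image of $\partial^0$ restricted to the cyclic piece is trivial), so $HC^1(A,B,\varepsilon)=\Ker(\partial^1)\cap\overline{C}_\lambda^1(A,B,\varepsilon)$. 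Thus the corollary will follow once I identify the extra condition ``$f\in\overline{C}_\lambda^1$'' with the displayed antisymmetry condition defining the superscript-$1$ set.

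First I would write out the cyclic operator $\lambda$ explicitly in dimension $n=1$. Using the formula for the $C_{n+1}$-action from Section \ref{seccyc} with $n=1$, an element $f\in\overline{C}^1(A,B,\varepsilon)=\Hom_\mathbbm{k}(A^{\otimes2}\otimes B,\mathbbm{k})$ satisfies
$$
\lambda f\Big(\otimes\begin{pmatrix} a_0 & b_{0,1}\\ 1 & a_1\end{pmatrix}\Big)=(-1)^1 f\Big(\otimes\begin{pmatrix} a_1 & b_{0,1}\\ 1 & a_0\end{pmatrix}\Big)=-f\Big(\otimes\begin{pmatrix} a_1 & b_{0,1}\\ 1 & a_0\end{pmatrix}\Big).
$$
Hence $f\in\Ker(1-\lambda)$ means precisely that $f\Big(\otimes\begin{pmatrix} a & \alpha\\ 1 & b\end{pmatrix}\Big)=-f\Big(\otimes\begin{pmatrix} b & \alpha\\ 1 & a\end{pmatrix}\Big)$, which is exactly the condition appended to $\Der_\mathbbm{k}(\mathcal{T},A^*)$ to form $\Der_\mathbbm{k}^1(\mathcal{T},A^*)$. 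So the combinatorial bookkeeping of the cyclic action in low dimension is the only real content here, and it matches on the nose.

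Next I would assemble the argument: the isomorphism of Theorem \ref{HH1ABCo} is the identity on underlying $\mathbbm{k}$-linear maps $A^{\otimes2}\otimes B\to\mathbbm{k}$ (it merely reinterprets the cocycle relation \eqref{HHrel} as the secondary-derivation relations \eqref{Derrel}), so it carries the subset cut out by the antisymmetry condition on the $HH^1$ side bijectively onto the subset cut out by the same antisymmetry condition on the $\Der$ side. Intersecting both sides of the isomorphism $\Ker(\partial^1)\cong\Der_\mathbbm{k}(\mathcal{T},A^*)$ with this common condition yields $HC^1(A,B,\varepsilon)=\Ker(\partial^1)\cap\overline{C}_\lambda^1\cong\Der_\mathbbm{k}^1(\mathcal{T},A^*)$. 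One subtlety worth a sentence: I should confirm that $\partial^1$ maps the cyclic submodule $\overline{C}_\lambda^1$ into $\overline{C}_\lambda^2$ — this is precisely the compatibility that makes $\overline{\mathbf{C}}_\lambda^\bullet$ a subcomplex, which is implicitly used in \cite{LSS} and which I would cite rather than reprove, so that $\Ker(\partial^1|_{\overline{C}_\lambda^1})=\Ker(\partial^1)\cap\overline{C}_\lambda^1$.

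The main obstacle, such as it is, is purely notational: correctly normalizing the sign $(-1)^n$ in the cyclic action so that in dimension $1$ it produces the stated antisymmetry (with the $b_{0,1}$-entry $\alpha$ held fixed and the two $A$-entries swapped) rather than some variant with an extra sign or a permuted $B$-entry. Since there is only one $B$-slot in dimension $1$, the matrix shuffle is trivial and the sign works out to give exactly the displayed relation; I expect no genuine difficulty, and the proof will be short.
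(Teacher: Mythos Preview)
Your argument is correct, but it is not the route the paper takes. The paper invokes Connes' long exact sequence (Theorem~\ref{SCLESCo}): since $HC^{-1}(A,B,\varepsilon)=0$, exactness forces $I^*:HC^1(A,B,\varepsilon)\hookrightarrow HH^1(A,B,\varepsilon)\cong\Der_\mathbbm{k}(\mathcal{T},A^*)$ to be injective, and then the image is identified as the cyclic (antisymmetric) maps because $I^*$ is induced by the inclusion of the cyclic subcomplex. You instead bypass the long exact sequence entirely and work directly from the definition $HC^1=\Ker(\partial^1|_{\overline{C}_\lambda^1})/\Ig(\partial^0|_{\overline{C}_\lambda^0})$, using Example~\ref{HH0ex} to kill the image and then intersecting the identification of Theorem~\ref{HH1ABCo} with the explicit condition $\lambda f=f$ in degree~$1$. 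Your approach is more elementary and self-contained---it does not need Theorem~\ref{SCLESCo} at all---while the paper's approach illustrates how the long exact sequence packages the same information and would scale more readily to statements in higher degrees where a bare-hands subcomplex computation is less pleasant.
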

\begin{proof}
First we observe that $\mathcal{T}=(A,B,\varepsilon)$ is a commutative triple, and so Proposition \ref{HH1ABCo} will apply when we use Theorem \ref{SCLESCo} in low dimension. Specifically, we have that
$$
0\longrightarrow HC^1(A,B,\varepsilon)\xrightarrow{~I^*~}\Der_\mathbbm{k}(\mathcal{T},A^*)\xrightarrow{~B^*~}\ldots
$$
Thus, by exactness and the first isomorphism theorem, one gets
$$
HC^1(A,B,\varepsilon)\cong\frac{HC^1(A,B,\varepsilon)}{\{0\}}=\frac{HC^1(A,B,\varepsilon)}{\Ker(I^*)}\cong\Ig(I^*)\subseteq\Der_\mathbbm{k}(\mathcal{T},A^*).
$$
Therefore, since $I^*$ is induced by inclusion, we have that $\Ig(I^*)$ contains the cyclic maps described in Section \ref{seccyc}, which are those that satisfy
$$
D\Big(\otimes\begin{pmatrix} a & \alpha\\ 1 & b\\ \end{pmatrix}\Big)=-D\Big(\otimes\begin{pmatrix} b & \alpha\\ 1 & a\\ \end{pmatrix}\Big),
$$
as desired.
\end{proof}

\begin{remark}
Notice that Theorem \ref{HH1ABCo} and Corollary \ref{HC1ABCo} reduce to the expected classical results when we take $B=\mathbbm{k}$, as described at the start of Section \ref{sec3}.
\end{remark}

%%%%%%%%%%%%%%%%%%%%%%%%%%%%%%%%%%%%%%%%%%%%%%%%%%%%%%%%%%%%%%%%%%%%%%%%%%%%
\section{The Universal Property}\label{sec4}
%%%%%%%%%%%%%%%%%%%%%%%%%%%%%%%%%%%%%%%%%%%%%%%%%%%%%%%%%%%%%%%%%%%%%%%%%%%%

The purpose of this section is twofold: to highlight a universal property for the secondary derivations (introduced in Definition \ref{SDeriv}), and to show that the secondary K\"ahler differentials from \cite{L2} are nontrivial (see Definition \ref{skdiff}). In particular, the results from this section will run parallel to what was recalled in Section \ref{ssecuni}.

\begin{definition}
The secondary derivation $D:B\otimes A\longrightarrow M$ is said to be \textbf{universal} if for any other secondary derivation $\delta:B\otimes A\longrightarrow N$ there exists a unique $A$-linear map $\varphi:M\longrightarrow N$ such that $\delta=\varphi\circ D$. In other words, the following diagram commutes:
$$
\begin{tikzpicture}[scale=3]
\node (a) at (0,.5) {$B\otimes A$};
\node (b) at (1,0) {$N$};
\node (c) at (1,1) {$M$};
\path[->,font=\small,>=angle 90]
(a) edge node [above] {$\delta$} (b)
(a) edge node [above] {$D$} (c);
\path[->,dashed,font=\small,>=angle 90]
(c) edge node [right] {$\exists!\varphi$} (b);
\end{tikzpicture}
$$
\end{definition}

\begin{proposition}\label{SDerDiff}
For a commutative triple $\mathcal{T}=(A,B,\varepsilon)$, we have that the map
$$d:B\otimes A\longrightarrow\Omega_{\mathcal{T}|\mathbbm{k}}^1$$
given by $\alpha\otimes a\longmapsto d(\alpha\otimes a)$ is the universal secondary derivation.
\end{proposition}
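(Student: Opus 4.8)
The plan is to verify the two defining requirements of universality for the map $d\colon B\otimes A\longrightarrow\Omega_{\mathcal{T}|\mathbbm{k}}^1$: that $d$ is itself a secondary derivation, and that it enjoys the required factorization property. The first requirement is immediate by construction — relations \eqref{og1}, \eqref{og2}, and \eqref{newcon} of Definition \ref{skdiff} are precisely conditions (i), (ii), and (iii) of Definition \ref{SDeriv}, and the module structure $(\alpha\otimes a)\cdot d(\beta\otimes b)=a\varepsilon(\alpha)d(\beta\otimes b)$ defining $\Omega_{\mathcal{T}|\mathbbm{k}}^1$ matches the symmetric bimodule structure demanded of a secondary derivation (here one uses that $\varepsilon(B)\subseteq\mathcal{Z}(A)$ and that $A$ is commutative to see the left and right actions agree). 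So the bulk of the argument is the universal factorization.

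First I would fix an arbitrary secondary derivation $\delta\colon B\otimes A\longrightarrow N$ and define $\varphi\colon\Omega_{\mathcal{T}|\mathbbm{k}}^1\longrightarrow N$ on generators by $\varphi(d(\alpha\otimes a))=\delta(\alpha\otimes a)$, extended $B\otimes A$-linearly via $\varphi\big((\alpha\otimes a)\cdot d(\beta\otimes b)\big)=a\varepsilon(\alpha)\delta(\beta\otimes b)$. The key step is to check this is well defined: since $\Omega_{\mathcal{T}|\mathbbm{k}}^1$ is defined by generators and relations, I must confirm that $\varphi$ respects each of the three relations \eqref{og1}–\eqref{newcon}. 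But this is exactly where the hypothesis that $\delta$ is a secondary derivation enters — relation \eqref{og1} is handled by condition (i) on $\delta$, relation \eqref{og2} by condition (ii), and relation \eqref{newcon} by condition (iii). Thus $\varphi$ descends to a well-defined $A$-linear (equivalently $B\otimes A$-linear) map, and by construction $\varphi\circ d=\delta$.

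Finally I would address uniqueness: any $A$-linear $\psi$ with $\psi\circ d=\delta$ must agree with $\varphi$ on every symbol $d(\alpha\otimes a)$, hence on all of $\Omega_{\mathcal{T}|\mathbbm{k}}^1$ since these symbols generate the module and both maps are $A$-linear (indeed $B\otimes A$-linear). I expect the main obstacle — really the only nontrivial point — to be the well-definedness check, and within it the verification that the module action $\varphi$ is being asked to preserve is genuinely consistent: one must make sure that the formula $\varphi\big((\alpha\otimes a)\cdot d(\beta\otimes b)\big)=a\varepsilon(\alpha)\delta(\beta\otimes b)$ does not clash with the relations, e.g. that applying $\varphi$ to both sides of \eqref{og2} after multiplying through by an element of $B\otimes A$ still gives a consistent answer. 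Since the relations defining $\Omega_{\mathcal{T}|\mathbbm{k}}^1$ and the axioms defining a secondary derivation were set up in deliberate correspondence, this check goes through cleanly, and the universal property follows; this is the secondary analogue of Proposition \ref{UniProp}.
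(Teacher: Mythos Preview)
Your proposal is correct and follows essentially the same approach as the paper: verify that $d$ is a secondary derivation by matching the defining relations of $\Omega_{\mathcal{T}|\mathbbm{k}}^1$ with the axioms of Definition \ref{SDeriv}, then produce the factoring map $\varphi$ by sending $d(\alpha\otimes a)\mapsto\delta(\alpha\otimes a)$. The paper's own proof is considerably terser, omitting the explicit well-definedness and uniqueness checks that you spell out, but the underlying argument is the same.
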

\begin{proof}
In order to verify this map is a secondary derivation, there are three conditions to check from Definition \ref{SDeriv}. However, these all follow immediately from the definition of secondary K\"ahler differentials $\Omega_{\mathcal{T}|\mathbbm{k}}^1$ (see Definition \ref{skdiff}), and because $A$ is commutative. Thus $d:B\otimes A\longrightarrow\Omega_{\mathcal{T}|\mathbbm{k}}^1$ is a secondary derivation.

The map is also universal since for any derivation $\delta:B\otimes A\longrightarrow N $ there is a unique map $\varphi:\Omega_{\mathcal{T}|\mathbbm{k}}^1\longrightarrow N$ determined by $d(\alpha\otimes a)\longmapsto\delta(\alpha\otimes a)$. It is then immediate that $\delta=\varphi\circ d$ by construction.
\end{proof}

\begin{proposition}\label{SDerDiffHomo}
For a commutative triple $\mathcal{T}=(A,B,\varepsilon)$, we have that
$$\Hom_A(\Omega_{\mathcal{T}|\mathbbm{k}}^1,M)\longrightarrow\Der_\mathbbm{k}(\mathcal{T},M)$$
given by $f\longmapsto f\circ d$ is an isomorphism.
\end{proposition}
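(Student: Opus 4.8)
The plan is to show the map $\Phi\colon\Hom_A(\Omega_{\mathcal{T}|\mathbbm{k}}^1,M)\longrightarrow\Der_\mathbbm{k}(\mathcal{T},M)$, $f\longmapsto f\circ d$, is a well-defined $\mathbbm{k}$-linear bijection by exhibiting an explicit inverse, exactly mirroring the proof of Proposition \ref{DerDiffHomo} but now using the universal property established in Proposition \ref{SDerDiff}. First I would check that $\Phi$ is well-defined: given an $A$-linear $f\colon\Omega_{\mathcal{T}|\mathbbm{k}}^1\longrightarrow M$, the composite $f\circ d$ is $\mathbbm{k}$-linear, and it satisfies conditions (i)--(iii) of Definition \ref{SDeriv} because $d$ does (by Proposition \ref{SDerDiff}) and $f$ respects the $B\otimes A$-action; here one must note that $A$-linearity of $f$ together with the symmetric bimodule structure on $\Omega_{\mathcal{T}|\mathbbm{k}}^1$ and on $M$ guarantees $(f\circ d)$ inherits the symmetric bimodule compatibility $(\alpha\otimes a)\cdot(f\circ d)(\beta\otimes b)=a\varepsilon(\alpha)(f\circ d)(\beta\otimes b)$. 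Linearity of $\Phi$ in $f$ is immediate.

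Next I would construct the inverse. Given a secondary derivation $\delta\colon B\otimes A\longrightarrow M$, Proposition \ref{SDerDiff} (the universal property of $d$) produces a unique $A$-linear map $\varphi_\delta\colon\Omega_{\mathcal{T}|\mathbbm{k}}^1\longrightarrow M$ with $\delta=\varphi_\delta\circ d$. Define $\Psi(\delta)=\varphi_\delta$. Then $\Phi(\Psi(\delta))=\varphi_\delta\circ d=\delta$ by construction, so $\Phi\circ\Psi=\mathrm{id}$. For the other composite, start from $f\in\Hom_A(\Omega_{\mathcal{T}|\mathbbm{k}}^1,M)$: the map $f$ itself is an $A$-linear map satisfying $(f\circ d)=\Phi(f)$, so by the \emph{uniqueness} clause of the universal property applied to the secondary derivation $\Phi(f)$, we must have $\varphi_{\Phi(f)}=f$, i.e. $\Psi(\Phi(f))=f$. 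Hence $\Psi\circ\Phi=\mathrm{id}$, and $\Phi$ is a bijection; since it is also $\mathbbm{k}$-linear, it is an isomorphism of $\mathbbm{k}$-vector spaces (and in fact one can upgrade this to an $A$-module isomorphism if the $A$-module structures on both sides are recorded).

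The main obstacle, modest as it is, lies in the well-definedness step: one has to be careful that a generic $A$-linear map $f\colon\Omega_{\mathcal{T}|\mathbbm{k}}^1\longrightarrow M$ really does yield a map satisfying the third, ``extra'' condition $D(\alpha\otimes1)+D(\alpha\otimes1)=D(1\otimes\varepsilon(\alpha))$ of Definition \ref{SDeriv}. This is where relation \eqref{newcon} in the definition of $\Omega_{\mathcal{T}|\mathbbm{k}}^1$ is used: applying $f$ to $d(\alpha\otimes1)+d(\alpha\otimes1)=d(1\otimes\varepsilon(\alpha))$ and using $\mathbbm{k}$-linearity of $f$ transports this relation into $M$. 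The only other point requiring a word is that the universal property of Proposition \ref{SDerDiff} already packages both existence and uniqueness of the factoring map, so no separate verification that $\varphi_\delta$ is $A$-linear or well-defined on generators is needed here — it was done once and for all in that proof. Consequently the argument is a short formal consequence of Proposition \ref{SDerDiff}, and I would present it in essentially the two or three lines above rather than re-deriving the derivation axioms.
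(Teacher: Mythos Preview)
Your proposal is correct and follows essentially the same approach as the paper: verify that $f\circ d$ is a secondary derivation by checking conditions (i)--(iii) of Definition~\ref{SDeriv} (using the relations defining $\Omega_{\mathcal{T}|\mathbbm{k}}^1$ and $A$-linearity of $f$), and then invoke the universal property of $d$ from Proposition~\ref{SDerDiff} to obtain the isomorphism. The paper is terser, simply saying the isomorphism ``follows from the universality,'' whereas you spell out the inverse $\Psi$ and both composites explicitly, but the content is the same.
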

\begin{proof}
We first note that the domain consists of $A$-linear morphisms, which will play a role below. Next we show that $f\circ d$ is a secondary derivation. From Definition \ref{SDeriv}, there are three conditions to check; condition (i) is clear, but for (ii) we have
\begin{align*}
(f\circ d)((\alpha\otimes a)(\beta\otimes b))&=f(d((\alpha\otimes a)(\beta\otimes b)))\\
&=f(a\varepsilon(\alpha)d(\beta\otimes b)+b\varepsilon(\beta)d(\alpha\otimes a))\\
&=f(a\varepsilon(\alpha)d(\beta\otimes b))+f(b\varepsilon(\beta)d(\alpha\otimes a))\\
&=a\varepsilon(\alpha)f(d(\beta\otimes b))+b\varepsilon(\beta)f(d(\alpha\otimes a))\\
&=a\varepsilon(\alpha)(f\circ d)(\beta\otimes b)+b\varepsilon(\beta)(f\circ d)(\alpha\otimes a),
\end{align*}
and for (iii) we have
\begin{align*}
(f\circ d)(\alpha\otimes1)+(f\circ d)(\alpha\otimes1)&=f(d(\alpha\otimes1))+f(d(\alpha\otimes1))\\
&=f(d(\alpha\otimes1)+d(\alpha\otimes1))\\
&=f(d(1\otimes\varepsilon(\alpha)))\\
&=(f\circ d)(1\otimes\varepsilon(\alpha)).
\end{align*}
Thus, $f\circ d$ is a secondary derivation. The isomorphism follows from the universality of $\Omega_{\mathcal{T}|\mathbbm{k}}^1$, coming from Proposition \ref{SDerDiff}.
\end{proof}

\begin{remark}
By taking $B=\mathbbm{k}$, note that these results will reduce to the usual case, as described in Section \ref{ssecuni}.
\end{remark}

\begin{remark}
Due to the isomorphism in Proposition \ref{SDerDiffHomo} and by Example \ref{goodex} (for instance), we conclude that the secondary K\"ahler differentials $\Omega_{\mathcal{T}|\mathbbm{k}}^1$ are nontrivial.
\end{remark}

%%%%%%%%%%%%%%%%%%%%%%%%%%%%%%%%%%%%%%%%%%%%%%%%%%%%%%%%%%%%%%%%%%%%%%%%%%%%

\end{document}